\newtheorem{theorem}                   {Theorem}
\newtheorem{lemma}           [theorem] {Lemma}  
\newtheorem{corollary}       [theorem] {Corollary}   
\newtheorem{proposition}     [theorem] {Proposition}
\theoremstyle{remark}
\newcommand{\eps}{\varepsilon}
\newcommand{\dcup}{\dot{\cup}}
\begin{document}

\title[Multicolor Ramsey Numbers]{On the multicolor Ramsey number of a graph with $m$ edges}
\author[Kathleen Johst]{Kathleen Johst*}
\address{Freie Universit\"at Berlin, Institut f\"ur Mathematik,  
 Berlin, Germany}
\email{kathleen.johst@fu-berlin.de}
\author[Yury Person]{Yury Person}
\address{Goethe-Universit\"at, Institut f\"ur Mathematik,
  Robert-Mayer-Str. 10, 60325 Frankfurt am Main, Germany}
\email{person@math.uni-frankfurt.de}

\thanks{
* This paper forms part of the first author's Master's thesis  at Freie Universit\"at Berlin.
}

\date{\today}

\begin{abstract}
  The multicolor Ramsey number $r_k(F)$ of a graph $F$ is the least integer $n$ such that 
in every coloring of the edges of $K_n$ by $k$ colors there is a monochromatic copy of $F$. In this short note 
we prove an upper bound on $r_k(F)$ for a graph $F$ with $m$ edges and no isolated vertices of the form $k^{6km^{2/3}}$ addressing a question of Sudakov [ Adv. Math. 227 (2011), no. 1,
601--609]. 
Furthermore, the constant in the exponent in the case of bipartite $F$ and two colors is lowered so that  $r_2(F)\le 2^{(1+o(1))2\sqrt{2m}}$ 
improving the result of Alon, Krivelevich and Sudakov [Combin. Probab. Comput. 12 (2003), no. 5--6, 477--494].
\end{abstract}

\maketitle

\section{Introduction}
 The by now classical theorem of Ramsey~\cite{Ram30} states that no matter how one colors 
the edges of the large enough complete graph $K_n$ with two colors, say red and blue, there will always be a 
monochromatic copy of $K_t$ in it. The smallest such $n$ is called the Ramsey number, denoted by $r(t)$  or $r(K_t)$. 
 First lower and upper bounds on $r(t)$ were obtained by Erd\H{o}s and Szekeres~
\cite{ErdSze35}: $r(t)\le \tbinom{2t-2}{t-1}$ and by 
 Erd\H{o}s~\cite{Erd47}: $r(t)\ge 2^{t/2}$. 
 Despite numerous efforts by various researchers, the best lower and upper bounds remain asymptotically $2^{(1+o(1))t/2}$ and $2^{(1+o(1))2t}$, for the 
currently best bounds see Conlon~\cite{Con09} and Spencer~\cite{Spe75}. 

Thus,  one turned to the study of Ramsey numbers of graphs other than complete graphs $K_t$. 
The multicolor Ramsey number for $k$ colors of a graph $F$, denoted $r_k(F)$, is defined as the smallest number $n$ such that in any coloring of $E(K_n)$  by $k$ colors there is 
a monochromatic copy of a graph $F$ in one of the $k$ colors. Much attention was drawn by the conjectures of Burr and Erd\H{o}s~\cite{BurErd75} about Ramsey numbers of graphs $F$ whose maximum degree is 
bounded by a constant and which are $d$-degenerate for some constant $d$ stating that these Ramsey numbers are linear in $v(F):=|V(F)|$.
 While the first conjecture has been resolved positively by Chvat\'al, R\"odl, Szemer\'edi and Trotter~\cite{CRST83}, the latter one is still open 
and the best bound is due to Fox and Sudakov~\cite{FoxSud09BE} being $r_2(F)\le 2^{c_d\sqrt{\log n}}n$ for $c_d$ depending on $d$ only.  

A related conjecture of Erd\H{o}s and Graham~\cite{ErdGra75}  states that among all graphs $F$ with $m=\tbinom{t}{2}$ edges 
and no isolated vertices  
 the Ramsey number $r(t)$ of the complete graph $K_t$ is an upper bound on $r_2(F)$.  
A relaxation conjectured by Erd\H{o}s~\cite{ChuGra98} states that at least $r(F)\le 2^{c\sqrt{m}}$ should hold for any graph $F$ with $m$ edges and no isolated vertices and some absolute constant $c$. 
This was verified by  Alon, Krivelevich and Sudakov~\cite{AKS03} who showed that if $F$ is bipartite, has $m$ edges and  no isolated vertices then 
 $r(F)\le 2^{16\sqrt{m}+1}$, 
and for nonbipartite $F$  showing $r(F)\le 2^{7\sqrt{m}\log_2 m}$. Finally, the general case was settled by Sudakov~\cite{Sud11} who proved $r(F)\le 2^{250\sqrt{m}}$.

In his concluding remarks in~\cite{Sud11}, Sudakov mentions that the methods used to settle the 
general case are not extendible to more colors and it would be interesting to understand 
the growth of $r_k(F)$. It is easy to see that there is an upper bound on $r_k(F)$ of the form $k^{k v(F)}$  by 
finding a monochromatic copy of $K_{2m}\supset F$ using the classical color focussing argument.
 In this note we prove to the best of our knowledge a first nontrivial upper bound  $r_k(F)\le k^{6km^{2/3}}$.
\begin{theorem}\label{thm:multi}
 Let $F$ be a graph with $m$ edges and no isolated vertices. Then, for $k\ge 3$  it holds  
\[
 r_k(F)\le k^{3 \cdot2^{-1/3}km^{2/3}+k(2m)^{1/3}}8m. 
\]
\end{theorem}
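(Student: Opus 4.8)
The plan is to split $F$ according to its degree sequence and to handle the two pieces by different versions of the classical colour‑focusing argument, optimizing a single degree threshold $d$ against the cost of the two pieces.

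\medskip\noindent\textbf{Decomposition.} Set $d:=\lceil (2m)^{1/3}\rceil$ and let $B\subseteq V(F)$ be the set of vertices of degree at least $d$ in $F$. Since $\sum_v\deg_F(v)=2m$ we have $|B|\le 2m/d\le (2m)^{2/3}$, and the induced subgraph $F[S]$ on $S:=V(F)\setminus B$ has maximum degree less than $d$ and at most $v(F)\le 2m$ vertices (there are no isolated vertices). Thus $F$ is a subgraph of $H:=K_{|B|}\vee F[S]$ (a clique on $B$, completely joined to $S$, with $F[S]$ induced on $S$): every edge of $F$ lies inside $B$, inside $S$, or between them. It is therefore enough to bound $r_k(H)$.

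\medskip\noindent\textbf{Hosting the dense part.} Recall the colour‑focusing construction: in a $k$‑coloured $K_n$ one greedily commits vertices $v_1,v_2,\dots$, each time passing to the majority colour class of edges from the new vertex into the current live set; after $N$ steps this produces colours $c_1,\dots,c_N$ with the edge $v_iv_j$ ($i<j$) of colour $c_i$, and a leftover \emph{reservoir} $R$ with $|R|\ge (n-N)/k^N$ such that each $v_i$ is joined to all of $R$ in colour $c_i$. Hence for every colour $\gamma$ the positions of colour $\gamma$ span a monochromatic clique in colour $\gamma$ joined to $R$ entirely in colour $\gamma$, and the most frequent colour $c$ spans such a clique of size at least $N/k$. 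I would take $N$ just large enough that this clique comfortably accommodates $B$, and embed $B$ into it, in colour $c$.

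\medskip\noindent\textbf{Hosting the sparse part --- the crux.} It then remains to embed $F[S]$ into the unused part of the clique together with the reservoir $R$, \emph{in the same colour} $c$; the at most $d-1$ edges from each vertex of $S$ to $B$ are then automatically of colour $c$. The point to exploit is that $F[S]$ has maximum degree less than $d=(2m)^{1/3}$, so it is properly colourable with fewer than $d$ classes — equivalently embeddable in a complete multipartite graph with fewer than $d$ parts, each of size at most $2m/d\le (2m)^{2/3}$ — which makes a monochromatic copy of $F[S]$ far cheaper to produce than one of $K_{v(F[S])}$. The honest difficulties are twofold. First, a generic reservoir carries no colour‑$c$ structure internally, so one cannot merely push $F[S]$ into $R$; indeed a smallest vertex cover of $F[S]$ may have $\Theta(m)$ vertices (already when $F[S]$ is a matching), and the high‑vertex‑cover portion of $F[S]$ must instead be routed through the monochromatic clique in a controlled way, which forces an extra term of order $k(2m)^{2/3}$ in the exponent — this is where the constant $3\cdot 2^{-1/3}$ rather than $1$ comes from. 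Second, one must guarantee that the copy of $F[S]$ produced lies in the very colour $c$ whose clique is hosting $B$; I would arrange this by carrying along the cliques of \emph{all} $k$ colours simultaneously (each is completely joined to $R$ in its own colour) and fixing the working colour only at the end, once the embedding of the sparse part pins it down.

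\medskip\noindent\textbf{Balancing.} Putting the pieces together gives $r_k(H)\le k^{N}\cdot 8m$ with $N$ of order $k\bigl(\tfrac{3}{2}(2m)^{2/3}+(2m)^{1/3}\bigr)$: the $(2m)^{2/3}$ accounts for $|B|\le 2m/d$ together with the clique surplus needed for the dense remainder of $F[S]$, the $(2m)^{1/3}$ for the ``depth'' of the multipartite structure of $F[S]$, and $8m$ bounds the reservoir size; the choice $d=(2m)^{1/3}$ is precisely the one that balances these, yielding $r_k(F)\le k^{3\cdot 2^{-1/3}km^{2/3}+k(2m)^{1/3}}\,8m$. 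The main obstacle — everything else being the standard focusing machinery — is making the sparse‑part embedding and the colour coordination quantitatively precise enough to output this clean exponent.
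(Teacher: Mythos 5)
Your decomposition of $F$ into the high-degree set $B$ (of size at most $2m/d$) and the bounded-degree remainder, and your use of colour focusing to host $B$ on a monochromatic clique joined to a reservoir, match the paper's strategy (the paper uses Proposition~\ref{prop:bddeg} with $d=(m/4)^{1/3}$ and $\ell=\lfloor(2m)^{2/3}\rfloor$ focusing steps per colour). But the step you yourself flag as ``the crux'' --- producing a copy of $F[S]$ in the \emph{same} colour $c$ inside the reservoir --- is a genuine gap, and the workaround you sketch does not close it. Routing the ``high-vertex-cover portion'' of $F[S]$ through the monochromatic clique cannot work quantitatively: as you note, a vertex cover of $F[S]$ can have $\Theta(m)$ vertices, while each additional clique vertex costs a factor of $k$ in $N$, so a clique large enough to absorb that portion would force an exponent of order $km$, not $km^{2/3}$. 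The extra $k\cdot\Theta(m^{2/3})$ you budget for this buys a clique of only $\Theta(m^{2/3})$ vertices. Likewise, a proper $(<d)$-colouring of $F[S]$ need not have classes of size $2m/d$ (a matching is $2$-colourable with classes of size $m$), so the multipartite container you describe is not as small as claimed.

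The missing ingredient is the dependent random choice machinery of Fox and Sudakov. In the paper, once a colour $c$ has been used $\ell$ times for focusing (becoming ``saturated''), each subsequent step in which $c$ is densest is a \emph{nesting} step: Lemma~\ref{lem:drc} (with $\eps=1/k$, $a=1$, $t=2d$) produces $A_{i+1}\subset A_i$ with $|A_{i+1}|\ge k^{-2d-2}|A_i|$ such that all but at most $(2d)^{-d}\binom{x}{d}$ of the $d$-sets in $A_{i+1}$ have at least $x$ common $c$-neighbours in $A_i$. After $d$ such steps in colour $c$ one has a nested sequence into which Lemma~\ref{lem:embed} greedily embeds the $(d+1)$-chromatic, maximum-degree-$\le d$ graph $F\setminus U$ entirely in colour $c$; the colour coordination is handled by interleaving focusing and nesting steps, always taking the densest colour in the current set, so that by pigeonhole some colour reaches $\ell+d$ occurrences within $k\ell+k(d-1)+1$ steps. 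This is where the exponent actually comes from: $k\ell\approx 2^{2/3}km^{2/3}$ from focusing and $(2d+2)kd\approx 2^{-1/3}km^{2/3}+k(2m)^{1/3}$ from nesting, summing to $3\cdot 2^{-1/3}km^{2/3}+k(2m)^{1/3}$ --- a different accounting from the one in your balancing paragraph. Without Lemmas~\ref{lem:drc} and~\ref{lem:embed} (or an equivalent device for monochromatically embedding a bounded-degree graph on up to $2m$ vertices at polynomial cost in the exponent), the argument does not go through.
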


Further we study the case when $F$ is bipartite and show an upper bound 
$r_k(F)\le  k^{(1+o(1))2\sqrt{mk}}$. 
\begin{theorem}\label{thm:bip}
 Let $F$ be a bipartite graph with $m$ edges and no isolated vertices. Then, for $k\ge 2$  it holds  
\[
  r_k(F)\le 2^6 m^{3/2} k^{2\sqrt{km}+1/2}.
\]
\end{theorem}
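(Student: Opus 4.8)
The plan is to run a two‑colour‑type argument inside the densest colour. Given a $k$-colouring of $E(K_N)$, fix a colour class $G$ carrying at least $\binom N2/k$ edges; it then suffices to show that every $N$-vertex graph $G$ of edge density at least $1/k$ contains $F$ once $N\ge 2^6m^{3/2}k^{2\sqrt{km}+1/2}$. I would lean on two structural facts about $F$: it has at most $2m$ vertices (no isolated vertices), and its degeneracy is at most $\lfloor\sqrt m\rfloor$ — indeed a bipartite subgraph of minimum degree $d$ has both parts of size at least $d$ and hence at least $d^2\le m$ edges. More flexibly, fixing a degree threshold $D$ I split $V(F)$ into the heavy set $W=\{v:\deg_F(v)>D\}$, which has fewer than $2m/D$ vertices and spans a bipartite subgraph with each part of size below $m/D$, and the light set $L=V(F)\setminus W$, every vertex of which has $\deg_F\le D$; then I order $V(F)$ by listing $W$ first and $L$ afterwards in a degeneracy order of $F[L]$, so that each light vertex has at most $D$ back-neighbours and each heavy vertex has fewer than $2m/D$.

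The embedding would proceed in two stages matched to this split. For $W$: the back-neighbours of $W$-vertices all lie in $W$, so embedding $F[W]$ amounts to finding a monochromatic complete bipartite graph $K_{\lceil m/D\rceil,\lceil m/D\rceil}$ inside $G$, which by the K\H{o}v\'ari--S\'os--Tur\'an estimate applied to the density-$1/k$ graph $G$ exists as soon as $N\gtrsim m\,k^{\,m/D}$ — crucially a cost of the form $k^{\Theta(m/D)}$, with no stray $\log m$ in the exponent. For $L$: I would produce, by dependent random choice inside $G$, a set $U$ containing the image of $W$ with $|U|\ge 2m$ in which every $\le D$-subset has at least $2m$ common neighbours inside $U$, and then extend greedily along the order, each light vertex having at most $D$ already-placed neighbours to respect. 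Here the number $t$ of random vertices used in the dependent-random-choice step is chosen proportional to $D$ rather than equal to it: with $t\asymp D$ the error term $N^{D-t}(2m)^t$ becomes negligible, which is precisely what keeps a spurious $2^{\Theta(D\log m)}$ out of the bound. Balancing the two costs $k^{m/D}$ and $k^{\Theta(D)}$ — equal near $D\asymp\sqrt{km}$ — and tracking the constants should yield the stated estimate; for $k=2$ it reads $2^6m^{3/2}2^{2\sqrt{2m}+1/2}=2^{(1+o(1))2\sqrt{2m}}$, the promised improvement over Alon--Krivelevich--Sudakov.

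The hard part, I expect, is the dependent-random-choice stage in exactly the form needed: I want common neighbours lying \emph{inside} the selected set $U$, not merely inside $G$, because a light vertex placed in $U$ itself becomes a back-neighbour of later vertices. A host like $G=K_{N/2,N/2}$ shows this cannot come from dependent random choice alone — there every common neighbourhood of a one-sided set is an independent set avoiding a typical $U$ — so one seems forced into a dichotomy: either $G$ already contains a large monochromatic complete bipartite subgraph, and the dense part of $F$, having small sides, embeds into it, or it does not, and then the within-$U$ codegree property becomes available. Making this dichotomy quantitative, and choosing $D$ and $t$ so that neither branch costs more than $k^{O(\sqrt{km})}$ — which is where the constant $2$ and the exponent $\sqrt{km}$ get pinned — is the technical core. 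A minor additional check is that genuinely wide instances (a large matching, a long path) drop out of the same scheme with the essentially optimal linear bound $O(km)$ rather than the exponential one.
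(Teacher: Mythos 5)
Your outline is not yet a proof: the step you yourself flag as ``the technical core'' --- obtaining, inside a single colour class of density $1/k$, a set $U$ containing the image of the heavy vertices in which every $\le D$-subset has many common neighbours \emph{within} $U$ --- is exactly the part that is missing, and the proposed dichotomy (``either $G$ contains a large complete bipartite subgraph, or the within-$U$ codegree property holds'') is not a known lemma and is given no proof or even a quantitative formulation. Without it neither branch of your argument closes: the K\H{o}v\'ari--S\'os--Tur\'an copy of $K_{s,s}$ for $F[W]$ is of no use unless its vertices can be made to lie in (or be compatible with) the dependent-random-choice set used for $L$, and a heavy vertex must in the end be adjacent not only to the other heavy vertices but to all of its light neighbours, which your $K_{s,s}$ alone does not provide. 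A second, independent problem is the cost accounting: the two costs you propose to balance, $k^{m/D}$ and $k^{\Theta(D)}$, are equal near $D\asymp\sqrt m$, not $D\asymp\sqrt{km}$; if your scheme really carried those costs it would prove the much stronger bound $k^{O(\sqrt m)}$, so the exponent $2\sqrt{km}$ in the theorem cannot be recovered from the analysis as written. The extra $\sqrt k$ in the paper's exponent comes from a term of the form $k\cdot(m/D)$ (not $m/D$) paid for the heavy vertices, which your reduction to a single fixed colour class erases without justification.

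The paper's route avoids all of this and is worth contrasting with yours. It never commits to one colour class: it runs a focussing iteration in which, at each step, the \emph{currently} densest colour in the current set $A_i$ is selected and a vertex of colour-degree at least $(|A_i|-1)/k$ in that colour is taken, shrinking $A_i$ by a factor $k$. After at most $kt+1$ steps (with $t=\lfloor\sqrt{m/k}\rfloor$) some colour $c$ has been selected $t$ times, yielding $t$ vertices that are complete in colour $c$ to each other and to the entire surviving set $A_{i+1}$ --- much stronger than a $K_{s,s}$, and strong enough that the heavy set $U$ of $F$ (with $\Delta(F\setminus U)\le d=\sqrt{km}$ by Proposition~\ref{prop:bddeg}) embeds onto them with no compatibility condition to check. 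The bounded-degree bipartite remainder $F\setminus U$ is then embedded in one shot into the colour-$c$ graph on $A_{i+1}$ (which has density at least $1/k$ since $c$ was densest there) by Theorem~\ref{thm:densityresult}; this black box absorbs the entire within-$U$ codegree difficulty. The two costs are $k^{kt}$ and $32dk^d\cdot 2m$, balancing at $d=\sqrt{km}$ and giving the stated exponent. If you want to salvage your approach, you would essentially have to reprove a multicolour version of the Alon--Krivelevich--Sudakov density embedding for unbounded-degree bipartite graphs, which is a genuinely harder statement than anything the paper uses.
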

Note that in the case $k=2$, Theorem~\ref{thm:bip} is an improvement of the above mentioned result 
of Alon, Krivelevich and Sudakov~\cite{AKS03} to $r(F)\le 2^{(1+o(1))2\sqrt{2m}}$. 
Remarkably, this upper bound is asymptotically the ``same'' as the upper bound $2^{(1+o(1))2k}$ for $r(k)$ with $m=\tbinom{k}{2}$. 

The methods we use are  slight modifications of the arguments of Fox and Sudakov from~\cite{FoxSud09a} and of Alon, Krivelevich and Sudakov~\cite{AKS03}. 
The paper is organized as follows. In the next section, Section~\ref{sec:tools} we collect some results and observations used in our proofs, in 
Section~\ref{sec:bip} we prove Theorem~\ref{thm:bip} and in Section~\ref{sec:multi} we show Theorem~\ref{thm:multi}.

\section{Some auxiliary results}\label{sec:tools}
Here we collect several results from~\cite{FoxSud09a} and one small graph theoretic estimate. 
The first prominent lemma we use is the so-called dependent random choice lemma, stating 
that in a bipartite dense graph one finds a large vertex subset in one class, 
most of whose $d$-tuples have many common neighbours on the other side.

\begin{lemma}[Dependent Random Choice, Lemma~2.1\cite{FoxSud09a}]\label{lem:drc}
If $\eps > 0$ and $G = (V_1, V_2; E)$  is a bipartite graph with $|V_1| = |V_2| = N$ and at least $\eps N^2$ edges, then for all 
positive integers $a$, $d$, $t$, $x$, there is a subset $A \subset V_2$ with $|A| \geq 2^{- \frac{1}{a}} \eps^t N$, such 
that for all but at most $2 \eps^{-ta} \left(\tfrac{x}{N} \right)^t \left(\tfrac{|A|}{N} \right)^a \binom{N}{d}$ $d$-sets $S$ in $A$, we have $|N(S)| \geq x$.
\end{lemma}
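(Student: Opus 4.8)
I would prove this by dependent random choice together with a simple weighting trick that packages the bound on exceptional $d$-sets. Choose $T=(u_1,\dots,u_t)$ with the coordinates i.i.d.\ uniform in $V_1$ (repetitions allowed), and let $A'=N(u_1)\cap\dots\cap N(u_t)\subseteq V_2$ be the common neighbourhood of $T$; write $N(v)$ for the neighbourhood in $V_1$ of $v\in V_2$. Since $\Pr[v\in A']=(|N(v)|/N)^t$,
\[
\mathbb{E}\,|A'|\;=\;\sum_{v\in V_2}\Bigl(\tfrac{|N(v)|}{N}\Bigr)^t\;\ge\;N\Bigl(\tfrac{1}{N}\sum_{v\in V_2}\tfrac{|N(v)|}{N}\Bigr)^t\;=\;N\Bigl(\tfrac{|E|}{N^2}\Bigr)^t\;\ge\;\eps^t N,
\]
by convexity of $z\mapsto z^t$ and $|E|\ge\eps N^2$; applying Jensen once more to $z\mapsto z^a$ (valid as $a\ge1$) gives $\mathbb{E}\bigl[|A'|^a\bigr]\ge(\eps^t N)^a=:B^a$.

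Next, call a $d$-set $S\subseteq V_2$ \emph{bad} if $|N(S)|<x$, with $N(S)=\bigcap_{v\in S}N(v)$, and let $Z=Z(T)$ count the bad $d$-sets contained in $A'$. A $d$-set $S$ satisfies $S\subseteq A'$ iff $T\subseteq N(S)$, so each bad $S$ has $\Pr[S\subseteq A']=(|N(S)|/N)^t\le(x/N)^t$, whence $\mathbb{E}[Z]\le\binom Nd(x/N)^t$. Now set $\alpha:=B^a\big/\bigl(2\binom Nd(x/N)^t\bigr)$ (assuming $d\le N$, so $\binom Nd>0$; the case $d>N$ is trivial) and consider $Y(T):=|A'|^a-\alpha Z(T)$. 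Linearity of expectation gives
\[
\mathbb{E}\bigl[Y(T)\bigr]\;=\;\mathbb{E}\bigl[|A'|^a\bigr]-\alpha\,\mathbb{E}[Z]\;\ge\;B^a-\alpha\binom Nd(x/N)^t\;=\;\tfrac12 B^a ,
\]
so some choice of $T$ has $Y(T)\ge \tfrac12 B^a$. For that $T$ we get $|A'|^a=Y(T)+\alpha Z(T)\ge\tfrac12 B^a$ (as $Z(T)\ge0$), hence $|A'|\ge 2^{-1/a}B=2^{-1/a}\eps^t N$; and $\alpha Z(T)=|A'|^a-Y(T)\le|A'|^a$, i.e.
\[
Z(T)\;\le\;\frac{|A'|^a}{\alpha}\;=\;2\,\eps^{-ta}\Bigl(\tfrac xN\Bigr)^t\Bigl(\tfrac{|A'|}{N}\Bigr)^a\binom Nd .
\]
Taking $A=A'$ yields the claim, since $Z(T)$ is precisely the number of $d$-sets $S\subseteq A$ with $|N(S)|<x$.

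The point requiring care — and what I expect to be the crux — is the calibration of $\alpha$: it must be chosen so that a \emph{single} outcome $T$ is simultaneously forced to have a large common neighbourhood ($|A'|\ge 2^{-1/a}\eps^t N$, which is where the factor $2^{-1/a}$ enters, via $(\tfrac12 B^a)^{1/a}$) and few bad $d$-sets, with the latter bound automatically expressed in terms of the actual size $|A'|$. Beyond that the argument is just a combination of two convexity estimates with a first-moment bound, and one only has to check that the degenerate parameter ranges (such as $d>N$, or $\eps^t N<1$) cause no trouble.
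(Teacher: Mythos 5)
Your proof is correct and is essentially the standard dependent random choice argument by which Fox and Sudakov establish this lemma (the paper only cites it): picking $t$ i.i.d.\ vertices of $V_1$, bounding $\mathbb{E}[|A'|^a]$ and the expected number of bad $d$-sets, and combining them via the weighted first-moment estimate $\mathbb{E}[|A'|^a-\alpha Z]\ge\tfrac12(\eps^tN)^a$, which yields exactly the stated $2^{-1/a}$ and the factor $2\eps^{-ta}(x/N)^t(|A|/N)^a\binom{N}{d}$. No gaps.
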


The following lemma allows one to embed a graph $H$ with bounded degree and bounded chromatic number into a dense graph $G$ given along with a nested sequence of sets, where 
the parts of $H$ are supposed to be embedded into. 

\begin{lemma}[Lemma~4.2 in~\cite{FoxSud09a}]\label{lem:embed}
Suppose $G$ is a graph with vertex set $V_1$, and let $V_1 \supset \ldots \supset V_q$ be a family of nested subsets 
of $V_1$ such that $|V_q| \geq x \geq 4n$, and for $1 \leq i <q$, all but less than $(2d)^{-d} \binom{x}{d}$ $d$-sets 
$U \subset V_{i+1}$ satisfy $|N(U) \cap V_i| \geq x$. Then, for every $q$-partite graph $H$ with $n$ vertices and maximum 
degree at most $\Delta(H) \leq d$, there are at least $\left( \tfrac{x}{4} \right)^n$ labeled copies of $H$ in $G$. 
\end{lemma}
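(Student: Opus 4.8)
The plan is to embed $H$ into $G$ one vertex at a time, sending the colour classes of $H$ into $V_q,V_{q-1},\dots,V_1$ in that order, and at each step to count the available choices. Fix a proper $q$-colouring of $H$ with classes $W_1,\dots,W_q$ and list $V(H)=v_1,\dots,v_n$ so that the classes form consecutive blocks, $W_q$ first and $W_1$ last; write $i(v)$ for the index of the class of $v$ and $N^-(v_j):=N_H(v_j)\cap\{v_1,\dots,v_{j-1}\}$. Since $H$ is properly coloured and the blocks are ordered this way, $N^-(v_j)\subseteq W_{i(v_j)+1}\cup\dots\cup W_q$, so in any block-respecting partial embedding the images of $N^-(v_j)$ lie in $V_{i(v_j)+1}$, and $|N^-(v_j)|\le\Delta(H)\le d$.

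First I would build a hierarchy of ``dangerous'' sets with threshold $\theta:=x/(2d)$. For $1\le i<q$, call a $d$-set $U\subseteq V_{i+1}$ \emph{$(i,d)$-dangerous} if $|N_G(U)\cap V_i|<x$ (these are precisely the ``bad'' $d$-sets of the hypothesis), and, descending, for $0\le k<d$ call a $k$-set $S\subseteq V_{i+1}$ \emph{$(i,k)$-dangerous} if more than $\theta$ vertices $w$ have $S\cup\{w\}$ $(i,k+1)$-dangerous. Two facts carry the argument. (a) If $|N_G(S)\cap V_i|<x$ with $|S|\le d$, then all $d$-supersets of $S$ inside $V_{i+1}$ share that property, and there are at least $|V_{i+1}|-d>\theta$ of them, so $S$ is $(i,|S|)$-dangerous by downward induction; contrapositively, if $S$ is not $(i,|S|)$-dangerous then $|N_G(S)\cap V_i|\ge x$. (b) Double counting incidences between $(i,k)$-dangerous and $(i,k{+}1)$-dangerous sets yields $\#\{(i,k)\text{-dangerous }k\text{-sets}\}<(k{+}1)\theta^{-1}\,\#\{(i,k{+}1)\text{-dangerous }(k{+}1)\text{-sets}\}$; chaining this from $k=d$ downwards and plugging in the hypothesis together with $\binom xd\le x^d/d!$ gives at most $\theta$ $(i,1)$-dangerous singletons, whence the empty set is never $(i,0)$-dangerous. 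This is where the constant $(2d)^{-d}$ and the inequality $x\ge 4n$ are spent.

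Then I would run the greedy embedding, maintaining that the current partial embedding $\phi$ of $\{v_1,\dots,v_j\}$ is injective, block-respecting, a homomorphism on the induced subgraph, and that for every $l$ and every $k\le|N^-(v_l)|$, if the first $k$ vertices of $N^-(v_l)$ (in the order $v_1,\dots,v_n$) have been placed then their image is not $(i(v_l),k)$-dangerous; in particular, by (a), once all of $N^-(v_l)$ is placed, $\phi(N^-(v_l))$ has at least $x$ common neighbours in $V_{i(v_l)}$. I claim each such $\phi$ has at least $x/4$ extensions to $v_{j+1}$ keeping all of this, so that from the empty map we obtain at least $(x/4)^n$ full embeddings, each a labelled copy of $H$ in $G$ with the $i$-th class inside $V_i$ --- exactly the conclusion. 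For the claim: the allowed images for $v_{j+1}$ --- those in $N_G(\phi(N^-(v_{j+1})))\cap V_{i(v_{j+1})}$ and not yet used --- number at least $x-n\ge 3x/4$ (using the common-neighbour bound just noted, or $|V_q|\ge x$ if $N^-(v_{j+1})=\emptyset$). Choosing $w$ for $v_{j+1}$ breaks the invariant only if, for one of the at most $\deg_H(v_{j+1})\le d$ later neighbours $v_l$ of $v_{j+1}$, writing $v_{j+1}$ as the $k$-th placed vertex of $N^-(v_l)$, the image of the first $k-1$ vertices of $N^-(v_l)$ together with $w$ becomes $(i(v_l),k)$-dangerous; as that image of $k-1$ vertices is not $(i(v_l),k-1)$-dangerous (for $k\ge2$ by the invariant, for $k=1$ because the empty set is never dangerous), this forbids at most $\theta$ values of $w$ per $v_l$, hence at most $d\theta=x/2$ altogether. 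So at least $3x/4-x/2=x/4$ valid extensions survive, and we are done.

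The step I expect to be the main obstacle is fact (b): arranging that the dangerous-set recursion closes with the \emph{same} threshold $\theta$ that also bounds the per-step loss by $x/2$ --- that is, balancing ``at most $\theta$ dangerous singletons'' against ``at most $d$ newly activated constraints per embedded vertex''. The remaining ingredients --- the block ordering, tracking injectivity (no more than $n$ occupied vertices at any time, absorbed by $x\ge 4n$), and the degenerate cases such as $d>n$ (where $\binom xd$ is so small that the hypothesis already forbids any dangerous $d$-set) --- are routine.
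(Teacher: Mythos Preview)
The paper does not prove this lemma; it is quoted verbatim as Lemma~4.2 of Fox and Sudakov~\cite{FoxSud09a} and used as a black box, so there is no in-paper argument to compare against. Your proposal is essentially the original Fox--Sudakov proof: order the colour classes of $H$ from $W_q$ down to $W_1$, introduce the recursive notion of dangerous $k$-sets with threshold $\theta=x/(2d)$, show by the double-counting chain that the empty set is never dangerous, and then greedily embed while forbidding at most $d\theta=x/2$ vertices per step. The arithmetic closes exactly as you indicate: $D_1<\frac{d!}{\theta^{d-1}}\cdot(2d)^{-d}\frac{x^d}{d!}=\theta$, and $x-n-d\theta\ge 3x/4-x/2=x/4$.

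Two small points worth tightening. First, in fact~(a) you need $|V_{i+1}|-d>\theta$, which follows from $|V_{i+1}|\ge x$ once you observe that one may take $d\le n-1$ (since $\Delta(H)\le n-1$), giving $x\ge 4n>2d^2/(2d-1)$; this is the ``routine'' degenerate case you allude to, but it should be made explicit. Second, when you invoke the dangerous-set bound for a later neighbour $v_l$, the candidate image $w$ lies in $V_{i(v_{j+1})}$, which may be strictly smaller than $V_{i(v_l)+1}$; this only helps (fewer bad $w$), but the definition of $(i,k)$-dangerous should be stated over $V_{i+1}$ so that the double count in~(b) and this embedding step are visibly compatible. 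With these clarifications your argument is complete and matches the source.
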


We will also need the following Tur\'an-type result, from which the currently best known upper bound on the Ramsey number of a bounded degree bipartite graph follows.
\begin{theorem}[Theorem~1.1 from~\cite{FoxSud09a}]\label{thm:densityresult}
Let $H$ be a bipartite graph with $n$ vertices and maximum degree $\Delta \geq 1$. 
If $\eps > 0$ and $G$ is a graph with $N \geq 32 \Delta \eps^{- \Delta} n$ vertices and at 
least $\eps \binom{N}{2}$ edges, then $H$ is a subgraph of $G$.
\end{theorem}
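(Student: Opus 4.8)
\emph{Proof plan for Theorem~\ref{thm:densityresult}.}
The plan is to bootstrap the two embedding lemmas already available (Lemma~\ref{lem:drc} and Lemma~\ref{lem:embed}): produce a long nested sequence of sets inside $G$ using dependent random choice, then feed it to Lemma~\ref{lem:embed}. First I would fix a balanced bipartite-type structure inside $G$: since $G$ has $N$ vertices and at least $\eps\binom{N}{2}$ edges, a standard averaging argument gives a partition (or a random split) of $V(G)$ into two sets $V_1,V_2$ of size $N/2$ each with at least $(\eps/2 - o(1))(N/2)^2$ edges between them, so without loss of generality we may work with a bipartite host graph $G=(V_1,V_2;E)$ on parts of size $N' := N/2$ and edge density bounded below by a constant multiple of $\eps$. (One must be slightly careful to keep the density bound clean; replacing $\eps$ by $\eps/4$, say, absorbs all the constant losses, and $N \ge 32\Delta\eps^{-\Delta}n$ still gives $N'$ comfortably larger than $8n$, which is what Lemma~\ref{lem:embed} needs.)

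Next I would build the nested sequence $V_1 \supset \ldots \supset V_q$ required by Lemma~\ref{lem:embed}, where $q=2$ suffices because $H$ is bipartite (maximum degree $\Delta$, so $d:=\Delta$ in the lemmas). Apply Lemma~\ref{lem:drc} to the bipartite graph $(V_1,V_2;E)$ with $d=\Delta$, with the parameters $a$, $t$, $x$ chosen so that two conditions hold simultaneously: (i) the resulting set $A\subset V_2$ has size at least $x$, where $x$ is taken to be a small constant multiple of $\eps^{\Delta} N$ (this is exactly the regime where $N \ge 32\Delta\eps^{-\Delta}n$ forces $x\ge 4n$, in fact $x\ge 8n$ with room to spare); and (ii) the number of bad $\Delta$-sets $S\subset A$ with $|N(S)|<x$ is below the threshold $(2\Delta)^{-\Delta}\binom{x}{\Delta}$ demanded by Lemma~\ref{lem:embed}. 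Condition (i) reads $2^{-1/a}\eps^{t}N \ge x$ and condition (ii) reads $2\eps^{-ta}(x/N)^t(|A|/N)^a\binom{N}{\Delta} \le (2\Delta)^{-\Delta}\binom{x}{\Delta}$; both become true by taking $t=1$ (a single neighbourhood step suffices for two colour classes), $a$ a suitable constant depending on $\Delta$, and $x = c\,\eps^{\Delta}N$ with $c$ a small absolute constant. Then the nested family $V_2 := A \subset V_1' := V_1$ (re-indexed so the smallest set has the ``all but few $\Delta$-sets have $\ge x$ common neighbours'' property, pointing back into $V_1$) satisfies the hypotheses of Lemma~\ref{lem:embed} with $q=2$ and this $x$.

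Finally, Lemma~\ref{lem:embed} yields at least $(x/4)^n > 0$ labeled copies of $H$ in $G$, in particular at least one, so $H\subseteq G$. The remaining work is purely arithmetic: verifying that the choice $N \ge 32\Delta\eps^{-\Delta}n$ is precisely what makes $x = c\eps^{\Delta}N \ge 8n$ with the constant $32$ chosen to leave slack for the factor-of-two loss in the bipartite split, the $2^{-1/a}$ loss in Lemma~\ref{lem:drc}, and the $(2\Delta)^{-\Delta}$ threshold in Lemma~\ref{lem:embed}. The main obstacle I anticipate is the bookkeeping of constants: one has to choose the pair $(a,x)$ so that the bad-$\Delta$-set bound of Lemma~\ref{lem:drc} beats the tolerance of Lemma~\ref{lem:embed} \emph{while} $|A|\ge x$ still holds, and since the bad-set bound contains the factor $(|A|/N)^a$, making $a$ large helps (ii) but one must check it does not shrink $|A|$ below $x$ via the $2^{-1/a}$ prefactor — it does not, since that prefactor is at least $1/2$ for all $a\ge 1$, so a clean absolute choice such as $a = \Delta + 2$ (or similar) works, and the density threshold $32\Delta\eps^{-\Delta}$ is then read off from the resulting inequality $c\,\eps^{\Delta}N \ge 8n$.
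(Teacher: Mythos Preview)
The paper does not prove Theorem~\ref{thm:densityresult}; it is quoted from Fox and Sudakov~\cite{FoxSud09a} as a black box and only applied, so there is no argument here to compare your plan against.

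Your outline does follow the Fox--Sudakov route in spirit, but the parameter choice $t=1$ in Lemma~\ref{lem:drc} is a genuine gap. You are conflating $t$ (the number of random pivots in dependent random choice) with $q$ (the number of colour classes in Lemma~\ref{lem:embed}); the latter is indeed $2$ because $H$ is bipartite, but $t$ must be of order $\Delta$. With $t=1$ the bad-set bound from Lemma~\ref{lem:drc} is $2\eps^{-a}(x/N')(|A|/N')^{a}\binom{N'}{\Delta}$, and since the lemma gives only a \emph{lower} bound on $|A|$ you must allow $(|A|/N')^{a}$ as large as $1$; your hope that ``making $a$ large helps'' is unfounded for this reason. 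Substituting $x\asymp\eps^{\Delta}N'$ and $\binom{N'}{\Delta}\asymp (N'/x)^{\Delta}\binom{x}{\Delta}$ then gives a quantity of order $\eps^{\,\Delta-a-\Delta^{2}}\binom{x}{\Delta}$, whose exponent is negative for every $\Delta\ge 1$ and $a\ge 1$, so the bound blows up as $\eps\to 0$ and can never be pushed below $(2\Delta)^{-\Delta}\binom{x}{\Delta}$. The fix is to take $t$ proportional to $\Delta$, so that the factor $(x/N')^{t}\asymp\eps^{\Delta t}$ dominates both $\eps^{-ta}$ and $(N'/x)^{\Delta}$; compare the choice $t=2d$ in this paper's own application of Lemma~\ref{lem:drc} inside the proof of Theorem~\ref{thm:multi}.

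A secondary point: your sets are not nested as written, since $A\subset V_{2}$ while $V_{1}$ and $V_{2}$ are the two disjoint halves of your bipartition. To feed Lemma~\ref{lem:embed} you should take the ambient set to be all of $V(G)$ (common neighbours in the bipartite subgraph are a fortiori common neighbours in $G$), not the opposite half $V_{1}$.
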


Finally we need one simple observation, whose proof we provide here for completeness.
\begin{proposition}\label{prop:bddeg}
 Let $F=(V,E)$ be a graph with $m$ edges. Then there exists a subset $U\subseteq V$ with $|U|< \tfrac{m}{d}$ such that $\Delta(F\setminus U)\le d$.
\end{proposition}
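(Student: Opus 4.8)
The plan is to repeatedly strip off high-degree vertices until none remain. Concretely, I would build $U$ greedily: start with $U_0 = \emptyset$ and $F_0 = F$, and as long as the current graph $F_i = F \setminus U_i$ has a vertex $v$ of degree greater than $d$ (in $F_i$), add $v$ to $U_i$ to form $U_{i+1} = U_i \cup \{v\}$. When the process terminates at step $j$, every vertex of $F \setminus U_j$ has degree at most $d$, so $\Delta(F \setminus U_j) \le d$ as required; set $U = U_j$.

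It remains to bound $|U| = j$. The key observation is that each time we move a vertex $v$ into $U$, we are deleting at least $d+1$ edges from the current graph: $v$ has more than $d$ neighbors in $F_i$, hence at least $d+1$ incident edges in $F_i$, and all of these disappear when passing to $F_{i+1}$. Moreover, edges removed at distinct steps are distinct (an edge removed at step $i$ is incident to the vertex deleted at step $i$ and is no longer present in $F_{i+1}$). Therefore the total number of edges deleted over the whole process is at least $j(d+1) > jd$. Since $F$ has only $m$ edges in total, we get $jd < m$, i.e.\ $j < m/d$, which is exactly $|U| < \tfrac{m}{d}$.

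There is essentially no obstacle here; the only point requiring a moment's care is the bookkeeping that the edge sets removed at different steps are genuinely disjoint, so that their sizes add. This is immediate because after deleting $v$ at step $i$, none of the edges incident to $v$ survive in $F_{i+1}$ or any later $F_\ell$, so an edge can be "charged" to at most one deletion step. One could alternatively phrase the whole argument as: $\sum_{v \in U} \deg_{F_i}(v) \ge |U|(d+1)$ where the degrees are taken at the moment of deletion, and the left-hand side is at most $m$ since these quantities count disjoint edge subsets; either way the bound $|U| < m/d$ follows, completing the proof.
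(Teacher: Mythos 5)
Your proof is correct and follows essentially the same approach as the paper: greedily delete vertices of degree exceeding $d$, observe that each deletion removes more than $d$ edges and that edges removed at distinct steps are disjoint, and conclude $|U|d < m$. The paper phrases this by always deleting a vertex of maximum degree and using the identity $m=\sum_i \Delta(F_i)$, but the counting is identical.
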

\begin{proof}
Let $v_1$ be a vertex of maximum degree in $F_1:=F$ and set $d_1:=\Delta(F)$. 
We delete $v_1$ from $F$ denoting the new graph by $F_2$. We proceed inductively,
deleting from $F_i$ a vertex of maximum degree $v_{i}$, setting $d_{i}:=\Delta(F_i)$ 
and defining the new graph $F_{i+1}:=F_i-v_{i}$ and stop with $F_{|V(F)|+1}=\emptyset$. 
Let $j$ be the smallest integer with $\Delta(F_{j+1})\le d$. 
Thus, till we obtained $F_{j+1}$ we must have deleted $j$ vertices, each of degree larger than $d$. 
Moreover, by the construction of the sequence of $v_i$s, 
we have $m=|E(F)|=\sum_{i=1}^{|V(F)|}\Delta(F_{i})$. Therefore, $jd< m$ and the claim follows with $U:=\{v_1,\ldots,v_j\}$.
\end{proof}

Often we try to avoid using  floor and ceiling signs as they will not affect our calculations.

\section{Multicolor Ramsey number of bipartite graphs with \texorpdfstring{$m$}{m} edges}\label{sec:bip}
The idea of the proof of Theorem~\ref{thm:bip} is quite simple. Given a coloring of $E(K_N)$, we will perform a color focussing argument by 
considering the densest color class and taking a vertex with maximum degree in it. 
Then we  iterate on the colored neighborhood of that vertex. 
 After less than $km/d$ steps we arrive at the situation, where we can embed 
 all $m/d$  vertices from $U$ (of high degree in $F$) onto the vertices specified in 
the focussing process, the remaining graph $F-U$ has maximum degree at most $d$ (by Proposition~\ref{prop:bddeg}) and is bipartite, 
and thus can be embedded in one round, by Theorem~\ref{thm:densityresult}.
\begin{proof}[Proof of Theorem~\ref{thm:bip}]
Given a bipartite graph $F$ with $m$ edges and no isolated vertices. We choose with foresight $d=\sqrt{km}$. 
By Proposition~\ref{prop:bddeg}, let $U$ be a set of $t=\lfloor m/d\rfloor=\lfloor \sqrt{m/k}\rfloor$ vertices 
such that $\Delta(F\setminus U)\le d$. Further observe that $|V(H)|\le 2m$. 

Let us be given an arbitrary but fixed $k$-edge coloring of the graph $G:=K_N$ with the colors $1$, \ldots, $k$, where $N = 32dk^{d+kt}2m$. 

We will construct a sequence of sets $A_1\supset A_2\supset \ldots\supset A_s$ and a sequence of colors $c(1)$,\ldots, $c(s)$ as follows. 
First we set $A_1=[N]$ and let $c(1)$ be a densest color in $G[A_1]$. 
Since we used $k$ colors there exists a vertex $v_1\in A_1$ connected to at least   
 $\tfrac{|A_1|-1}{k}$ vertices in color $c(1)$. We denote the set of these vertices by $A_2$. Then we proceed inductively as follows. 
Given a sequence of sets $A_1\supset A_2\supset \ldots\supset A_{i+1}$ and the corresponding sequences of vertices $v_1$,\ldots, $v_i$ and colors 
$c(1)$,\ldots, $c(i)$, we do the following. Let $c(i+1)$ be the densest color in $G[A_{i+1}]$. If $c(i+1)$ occurs at most $t$ times in the sequence $c(1)$,\ldots, $c(i+1)$ of colors constructed so far, 
then we choose $v_{i+1}\in A_{i+1}$ such that $v_{i+1}$ is connected to at least $\tfrac{|A_{i+1}|-1}{k}$ vertices in color $c(i+1)$ and we denote these vertices by $A_{i+2}$.
Otherwise we stop. It is clear that we stop after at most $kt+1$ steps, that is with $i+1\le kt+1$.

Next we identify $t$ vertices  $v_{j_1}$,\ldots, $v_{j_t}$ such that $c(j_1)=\ldots=c(j_t)=c(i+1)$. Observe that all vertices $v_{j_s}$ are connected in color $c(i+1)$ to each other 
and also to all vertices in $A_{i+1}$. Moreover, at least $\tfrac{1}{k}\tbinom{|A_{i+1}|}{2}$ edges of 
$G[A_{i+1}]$ are colored in $c(i+1)$. Therefore, we can embed the vertices from $U$ in $F$ onto 
$v_{j_1}$, \ldots, $v_{j_t}$, and then one needs to find an embedding 
$F\setminus U$ into $G[A_{i+1}]$ in color $c(i+1)$. 
But this is asserted to us by Theorem~\ref{thm:densityresult}, as long as 
\[
 |A_{i+1}|\ge 32 d k^d |V(F)\setminus U|.
\]

Since $i+1\le kt+1$ we obtain $|A_{i+1}|\ge \tfrac{N}{k^{kt}}-1$, and since we can assume 
that $F$ is not a matching (otherwise Theorem~\ref{thm:densityresult} implies the result immediately), 
we have $|V(F)|< 2m$ and therefore
\[
   |A_{i+1}|\ge 32 d k^d 2m-1\ge 32 d k^d |V(F)\setminus U|.
\]
Thus, $r_k(F)\le N= 32dk^{d+kt}2m\le 2^6 \sqrt{km^3} k^{2\sqrt{km}}=k^{(1+o(1))2\sqrt{km}}$. 
 \end{proof}

As an immediate consequence we get.
\begin{corollary}
Let $F$ be a bipartite graph with $m$ edges and without isolated vertices, then $r_2(F) \leq 2^{(1+o(1))2 \sqrt{2m}}$. 
\end{corollary}

\section{Multicolor Ramsey number of general graphs with \texorpdfstring{$m$}{m} edges}\label{sec:multi}
In this section we heavily rely on the tools developed by Fox and Sudakov in~\cite{FoxSud09a}. There they showed that $r_k(F)\le k^{2k\Delta q}n$ for  a 
graph $F$ with $n$ vertices, $\Delta(F)=\Delta$ and $\chi(F)=q$ (more generally, it holds for $k$ not necessarily isomorphic graphs $F_1$, \ldots, $F_k$ with the same properties). 
Their proof combines Lemmas~\ref{lem:drc} and~\ref{lem:embed}. 

Our proof strategy is in fact a slight modification of their argument 
intertwined with the process of  first embedding high degree vertices. The idea of embedding high degree vertices already occurs in~\cite{AKS03}. 
More precisely, since we are given a general graph $F$ with $m$ edges, we first seek to embed vertices of high degree (which will be done 
in a similar way as in the proof of Theorem~\ref{thm:bip}). 
However, this time we are going to use Lemma~\ref{lem:drc} instead of Theorem~\ref{thm:densityresult} repeatedly. 
The authors in~\cite{FoxSud09a}  show $r_k(F)\le k^{2k\Delta q}n$ by  applying 
iteratively Lemma~\ref{lem:drc} roughly $qk$ times, ``loosing'' each time roughly a factor of $k^{-k\Delta}$. 
Afterwards one identifies a long enough nested sequence to perform embedding 
(Lemma~\ref{lem:embed}). In our case however, we first need to reduce the maximum degree of $F$, 
and only then we will apply Lemma~\ref{lem:drc}. However, its applications intertwine with the 
focussing argument similar to the previous section, as each color might get filled up differently quickly.

\begin{proof}[Proof of Theorem~\ref{thm:multi}]
We choose with foresight $d=(m/4)^{1/3}$ and $\ell=\lfloor m/d\rfloor=\lfloor (2m)^{2/3}\rfloor$.  
Furthermore, we set $x = k^{-(2d+2)kd -k\ell}N$ and $N= k^{(2d+2)kd + k\ell} 8m$. 
Take a given graph  $F$ with $m$ edges and no isolated vertices.  
By Proposition~\ref{prop:bddeg}, let $U$ be a set of at most $\ell$ vertices such that $\Delta(F\setminus U)\le d$. Further observe that 
$|V(H)|\le 2m$ and $\chi(F\setminus U)\le d+1$. 

Let  an arbitrary but fixed coloring of the edges of the graph  $G:=K_N$ by $k$ colors be given. 

We set $A_1=[N]$ and construct a sequence of sets $A_1\supset A_2\supset \ldots\supset A_s$ and a sequence of colors $c(1)$,\ldots, $c(s-1)$ inductively as follows.
 

Given a sequence of sets $A_1\supset A_2\supset \ldots\supset A_{i}$ and the sequence of colors 
$c(1)$,\ldots, $c(i-1)$, we do the following. Let $c(i)$ be the densest color in $G[A_{i}]$. 
If $c(i)$ occurs at most $\ell$ times in the sequence 
$c(1)$,\ldots, $c(i)$ of colors constructed so far, 
then we choose $v_{i}\in A_{i}$ such that $v_{i}$ is connected to at least $\tfrac{|A_{i}|-1}{k}$ vertices 
in color $c(i)$ and we denote these vertices by $A_{i+1}$ (and we refer to this step as \emph{focussing}). 
If, however, the color $c(i)$ occurs more than $\ell$ times then we call $c(i)$ \emph{saturated}. 
As long as the saturated color $c(i)$ occurs at most  $t+d$ times among $c(1)$, \ldots, $c(i)$, 
 we consider a balanced bipartition of $A_{i}=A_{i,1}\dcup A_{i,2}$ (assume $|A_{i,1}|\le |A_{i,2}|$) such that 
at least $\tfrac{1}{k}|A_{i,1}||A_{i,2}|$ edges are colored by the color $c(i)$ (simply take a random balanced bipartition). 
Furthermore, we apply now Lemma~\ref{lem:drc} with $\eps=\tfrac{1}{k}$, 
$a=1$ and $t=2d$ and thus we find a subset $A_{i+1}\subset A_{i,2}\subset A_i$ with $|A_{i+1}|\ge 2^{-1} k^{-2d} |A_{i,2}|\ge k^{-2d-2}|A_i|$ (use $|A_{i,1}|\le |A_{i,2}|$),
 such that  all but at most 
\begin{equation}\label{eq:badsets}
 2 \cdot k^{2d} \left( \frac{x}{|A_{i,2}|} \right)^{2d} \left(\frac{|A_{i+1}|}{|A_{i,2}|} \right) \binom{|A_{i,2}|}{d}
\end{equation}
$d$-sets $S$ in $A_{i+1}$ have at least $x$ common neighbors in $G[A_i]$ in color $c(i)$ (actually in $A_{i,1}$). We refer to  such a step as \emph{nesting}. 
Moreover, we can use $|A_{i,2}|\ge \tfrac{1}{2}k^{-k\ell-(2d+2)k(d-1)} N$ to simplify and  bound~\eqref{eq:badsets} as: 
\[
 \left(\frac{x}{|A_{i,2}|}\right)^d (2k^2)^d  \binom{x}{d} \le \left(2 k^{-(2d+2)k}\right)^d (2k^2)^d  \binom{x}{d} \le (2d)^{-d} \binom{x}{d}.
\]
We stop constructing a sequence once we end up with colors $c(1)$, \ldots, $c(s)$ and sets 
$A_1\supset\ldots\supset A_{s+1}$ and there is one color $c$ which occurs 
$\ell+d$ times. Clearly, $s\le kt+k(d-1)+1$, since we first \emph{focus} in one color $t$ times before it gets saturated and then we need to \emph{nest} $d$ times in some color, 
before we stop the sequence construction. By the choice of  $N$, $x$, $\ell$ and $d$ we can clearly proceed for $k\ell+ k (d-1)+1$ steps if necessary. 

Let $c\in[k]$ be the color which occurs $\ell+d$ times and let $v_{i_1}$, \ldots, $v_{i_\ell}$ be the vertices 
which got selected in the first $\ell$ steps when the color $c$ was chosen (these were focussing steps) and 
let $A_{i_{\ell+1}}$,\ldots, $A_{i_{\ell+d}}$ be the sets with majority color $c$ during the nesting steps.  
Next we show how to find a $c$-colored copy of $F$, whose vertices are embedded onto $v_{i_1}$, \ldots, $v_{i_\ell}$ and 
into the sets  $A_{i_{\ell+1}}$, $A_{i_{\ell+1}+1}$,\ldots, $A_{i_{\ell+d}+1}$ (where $i_{\ell+d}=s$). 

By Proposition~\ref{prop:bddeg}, we have a set $U$ of  $\ell$ vertices of high degree 
which get embedded onto $v_{i_1}$, \ldots, $v_{i_\ell}$ and then all we 
need to do is to embed a copy of $F\setminus U$ into the nested sequence $A_{i_{\ell+1}}$, $A_{i_{\ell+1}+1}$,\ldots, $A_{i_{\ell+d}+1}$ 
in color $c$. But this can be done by Lemma~\ref{lem:embed}. This shows that $r_k(F)\le N$.
\end{proof}

\begin{corollary}
Let $F$ be a graph with $m$ edges and without isolated vertices. Then $r_k(F) \leq k^{6km^{2/3}}$.
\end{corollary}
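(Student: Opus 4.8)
The plan is to derive the corollary directly from Theorem~\ref{thm:multi} by an elementary estimate; there is no new combinatorial content. Since $k \ge 3$ and $F$ has $m \ge 1$ edges and no isolated vertices, Theorem~\ref{thm:multi} gives
\[
 r_k(F) \le 8m \cdot k^{\,3\cdot 2^{-1/3} k m^{2/3} + k(2m)^{1/3}} = k^{\,3\cdot 2^{-1/3} k m^{2/3} + k(2m)^{1/3} + \log_k(8m)},
\]
so it suffices to bound the exponent on the right-hand side by $6 k m^{2/3}$.

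I would handle the three summands in turn. For the leading term, $3 \cdot 2^{-1/3} < 2.39$. For the second, since $m \ge 1$ we have $m^{1/3} \le m^{2/3}$ and $2^{1/3} < 1.26$, hence $k(2m)^{1/3} \le 1.26\, k m^{2/3}$. For the last, since $k \ge 3$ we have $\log_k(8m) \le \log_3(8m)$, and it is enough to check $\log_3(8m) \le 2.35\, k m^{2/3}$; because $2.35\, k m^{2/3} \ge 7.05\, m^{2/3}$, this reduces to the one-variable inequality $\log_3(8m) \le 7.05\, m^{2/3}$ for all $m \ge 1$, which holds at $m = 1$ (left side below $2$, right side above $7$) and continues to hold since the right side grows faster in $m$ throughout $m\ge 1$. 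Adding the three bounds yields an exponent at most $(2.39 + 1.26 + 2.35)\, k m^{2/3} = 6\, k m^{2/3}$, as desired.

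The only thing to be careful about is the bookkeeping: all the real work is already done inside Theorem~\ref{thm:multi}, and the point is simply that the gap between the true leading constant $3\cdot 2^{-1/3} \approx 2.38$ and the round number $6$ is wide enough to absorb the additive term $k(2m)^{1/3}$ and the polynomial prefactor $8m$. (For $k = 2$ the conclusion would not follow from Theorem~\ref{thm:multi}, but the corollary is meant in the regime $k \ge 3$ treated there.)
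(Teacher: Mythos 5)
Your proposal is correct and is exactly what the paper intends: the corollary is stated immediately after Theorem~\ref{thm:multi} with no separate proof, so the only content is the elementary exponent comparison $3\cdot 2^{-1/3}km^{2/3}+k(2m)^{1/3}+\log_k(8m)\le 6km^{2/3}$, which you verify correctly (including the implicit restriction to $k\ge 3$ and $m\ge 1$).
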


\section{Concluding Remarks}
In this note we showed a first nontrivial upper bound on $r_k(F)$ to be $k^{6ke(F)^{2/3}}$. 
Certainly, there should be a room for improvement, maybe even to $k^{O(k \sqrt{e(F)})}$ thus generalizing the result of Sudakov. 
Another interesting direction would be to improve the result of Sudakov to $r_2(F)\le 2^{(1+o(1))2\sqrt{2m}}$ by obtaining  asymptotically 
the same upper bound as the best one known for $r_2(K_t)$ with $\tbinom{t}{2}=m$. This was noted by us to hold if $F$ is bipartite.
 
After the completion of this paper we learned that Conlon, Fox and Sudakov obtained 
a  result similar to our Theorem~\ref{thm:multi} independently~\cite{Sud13}.

\end{document}